\newcommand{\beq}{\begin{equation}}
\newcommand{\eeq}{\end{equation}}
\newcommand{\Z}{\ensuremath{\mathbb{Z}}}
\newcommand{\R}{\ensuremath{\mathbb{R}}}
\newcommand{\SL}{\ensuremath{\mathrm{SL}}}
\newcommand{\Tr}{\operatorname{Tr}}
\newcommand{\llt}{\ensuremath{\log \log T}}
\renewcommand{\epsilon}{\varepsilon}
\newcommand{\A}{\ensuremath{\mathcal{A}}}
\renewcommand{\P}{\ensuremath{\mathcal{P}}}
\newcommand{\bP}{\ensuremath{\mathbb{P}}}
\newcommand{\F}{\ensuremath{\mathbb{F}}}
\title{An analogue of the Erd\H{o}s--Kac theorem for the special linear group over the integers}
\author{Daniel El-Baz}
\address{School of Mathematical Sciences \\ Tel Aviv University \\ Tel Aviv \\ Israel}
\address{Max Planck Institute for Mathematics \\ Vivatsgasse 7 \\ 53111 Bonn \\ Germany}
\email{danielelbaz88@gmail.com}
\date{}
\begin{document}

\begin{abstract}
We investigate the number of prime factors of individual entries for matrices in the special linear group over the integers. We show that, when properly normalised, it satisfies a central limit theorem of Erd\H{o}s--Kac-type. To do so, we employ a sieve-theoretic set-up due to Granville and Soundararajan. We also make use of an estimate coming from homogeneous dynamics due to Gorodnik and Nevo. 
\end{abstract}

\subjclass[2010]{Primary 11N36; Secondary 22F30.}

\keywords{sieve theory, probabilistic number theory, effective congruence
counting.}

\maketitle

\newtheorem{defn}{Definition}[section]
\newtheorem{claim}{Claim}[section]
\providecommand*{\claimautorefname}{Claim}
\newtheorem{thm}{Theorem}[section]
\providecommand*{\thmautorefname}{Theorem}
\newtheorem{lemma}{Lemma}[section]
\providecommand*{\lemmaautorefname}{Lemma}
\newtheorem{cor}{Corollary}[section]
\providecommand*{\corautorefname}{Corollary}
\newtheorem{rmk}{Remark}[section]
\providecommand*{\rmkautorefname}{Remark}

\section{Introduction} \label{intro}

The celebrated Erd\H{o}s--Kac theorem is a central limit theorem for the number of (distinct) prime factors $\omega$ of a ``random'' integer, in the following sense: 
for every $x \in \R$, we have  
\[\lim_{n \to +\infty} \frac 1n \# \left\{1 \le m  \le n \, : \, \frac {\omega(m) - \log \log n}{\sqrt{\log \log n}} \le x \right\} = \frac 1{\sqrt{2 \pi}} \int_{-\infty}^x e^{- \frac{t^2}{2}} dt. \]

There is an abundant number of results of this type for the number of prime factors of various sequences of integers: shifted primes \cite{Halberstam1955}, values of integer polynomials \cite{Halberstam1956} and friable numbers \cite{Hensley1985, Alladi1987, Hildebrand1987} to cite but a few examples. 

In this note, we study the number of prime factors of the entries in integer matrices of unit determinant.
More precisely, we define, for $n \ge 2$ and $T > 0$, \beq V_T(\Z) = \{g \in \SL_n(\Z) : \| g \| \le T \}, \eeq
where for $g \in \SL_n(\Z)$, $\| g \| = \sqrt{\Tr(g ^{\mathrm{t}} g)}$ is the Frobenius norm.

For an integer $n \ge 1$, we let $\omega(n)$ be the number of distinct prime factors of $n$. We extend $\omega$ to $\Z$ by defining $\omega(0)=0$ and for $n \ge 1, \, \omega(-n) = \omega(n)$.
Our main theorem can now be stated.

\begin{thm} \label{eksln} For every $n \ge 2$, every $x \in \R$ and for each pair $(i,j) \in \{1, \ldots, n\}^2$, we have
\beq 
\lim_{T \to +\infty} 
\frac 1{\#V_T(\Z)}  \#\left \{g \in V_T(\Z) \, : \, \frac{\omega(g_{i,j})-\llt}{\sqrt{\llt}} \le x \right \}
= \frac 1{\sqrt{2 \pi}} \int_{-\infty}^x e^{- \frac{t^2}2} dt.  \eeq
\end{thm}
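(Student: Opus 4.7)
The plan is to use the method of moments: for every integer $k \ge 0$, I aim to show
\[ \E_T\!\left[\left(\frac{\omega(g_{i,j}) - \llt}{\sqrt{\llt}}\right)^k\right] \longrightarrow \mu_k \]
as $T \to +\infty$, where $\E_T$ denotes the uniform expectation over $V_T(\Z)$ and $\mu_k$ is the $k$-th moment of the standard Gaussian. Since the Gaussian is determined by its moments, this yields the claimed convergence in distribution. Write $\bP_T$ for the corresponding probability measure.

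Following Granville--Soundararajan, the first step is to replace $\omega$ by the truncation $\omega_y(m) = \#\{p \mid m : p \le y\}$ with $y = y(T)$ a slowly growing power of $T$ (e.g.\ $y = T^{1/\llt}$). Since $|g_{i,j}| \le T$, one has $\omega(g_{i,j}) - \omega_y(g_{i,j}) \le \log T/\log y$, which after normalisation by $\sqrt{\llt}$ tends to $0$; the truncated and untruncated moments therefore agree in the limit, and the task reduces to computing the moments of $\omega_y(g_{i,j})$.

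Expanding the $k$-th power of the truncated count gives
\[ \E_T[\omega_y(g_{i,j})^k] = \sum_{p_1, \ldots, p_k \le y} \bP_T\!\left(\mathrm{lcm}(p_1, \ldots, p_k) \mid g_{i,j}\right). \]
The effective congruence-counting theorem of Gorodnik--Nevo, applied to the principal congruence subgroups of $\SL_n(\Z)$, yields
\[ \bP_T(q \mid g_{i,j}) = \delta_q(i,j) + O\!\left(q^A T^{-\eta}\right) \]
for every squarefree $q$, where $\delta_q(i,j) = \#\{\gamma \in \SL_n(\Z/q\Z) : \gamma_{i,j} \equiv 0\}/|\SL_n(\Z/q\Z)|$ and $A, \eta > 0$ are absolute constants. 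A short algebraic computation, using the Chinese Remainder Theorem and a direct count over $\F_p$, establishes that $\delta_q$ is multiplicative and $\delta_p(i,j) = 1/p + O(1/p^2)$ uniformly in $(i,j)$; the case $i = j$ requires slightly more care because of the unit-determinant constraint. Choosing $y$ small enough that $y^{Ak} T^{-\eta} = o(1)$, substituting this expansion into the moment sum reduces it to exactly the combinatorial expression appearing in the classical Erd\H{o}s--Kac moment calculation. The Granville--Soundararajan analysis (separating the sum over $(p_1, \ldots, p_k)$ according to the number of distinct primes that appear) then produces
\[ \E_T\!\left[(\omega_y(g_{i,j}) - \llt)^k\right] = \mu_k (\llt)^{k/2} + o\!\left((\llt)^{k/2}\right), \]
which upon normalisation, together with the truncation estimate, completes the proof.

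The main obstacle I anticipate is quantitative: computing the $k$-th moment involves summing the Gorodnik--Nevo error over up to $y^k$ moduli, so the exponents $A$ and $\eta$ must be strong enough to allow $y$ to be chosen as a positive power of $T$ while keeping the error $o(1)$. A secondary but essential ingredient is the explicit computation of the local densities $\delta_p(i,j)$ with the precise $1/p + O(1/p^2)$ asymptotics uniformly in $(i,j)$: the Granville--Soundararajan machinery depends on exactly this shape to recover the Gaussian moments $\mu_k$.
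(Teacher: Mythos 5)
Your approach is essentially the paper's: method of moments, Gorodnik--Nevo effective congruence counting as the arithmetic input, truncation of $\omega$ to small primes, local density computation over $\F_p$ via the Chinese Remainder Theorem, and the Granville--Soundararajan moment machinery. However, the specific truncation parameter you offer does not work with the argument you sketch, and this is exactly the point where precision is required.

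You take $y = T^{1/\llt}$ and claim that the pointwise bound $\omega(g_{i,j}) - \omega_y(g_{i,j}) \le \log T/\log y$, after dividing by $\sqrt{\llt}$, tends to $0$. But with that choice $\log T/\log y = \llt$, so the normalised truncation error is $\le \sqrt{\llt}$, which tends to $+\infty$, not to $0$. For the pointwise argument to close you need $\log T/\log y = o(\sqrt{\llt})$; the paper achieves this by taking $y = T^{\epsilon(T)}$ with $\epsilon(T) = (\llt)^{-\psi}$ and $\psi \in (0,\tfrac12)$, giving a truncation error of size $(\llt)^\psi = o(\sqrt{\llt})$. (One could try to rescue $y = T^{1/\llt}$ by showing that $\omega - \omega_y$ is \emph{typically} of size $O(\log\log\log T)$ rather than using the worst-case bound, but that is a different and more delicate argument than the one you wrote.) Note also that the quantitative obstacle you flag as the main concern --- whether $y$ can be taken as large as a positive power of $T$ while controlling $\sum_{q \le y^k} q^A T^{-\eta}$ --- is the easy direction: any subpolynomial $y = T^{o(1)}$ makes that error negligible because of the polynomial power saving. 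The binding constraint runs the other way: $y$ must not be too \emph{small}, i.e.\ $\epsilon(T)$ must not decay faster than $(\llt)^{-1/2}$.

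A minor remark: the case $i = j$ does not in fact require more care. The count $\#\{g \in \SL_n(\F_p) : g_{i,j}=0\} = (p^{n-1}-1)(p^n-p)\cdots(p^n-p^{n-1})/(p-1)$ is obtained by choosing the $j$th column among nonzero vectors with vanishing $i$th entry, completing to a basis in $\mathrm{GL}_n(\F_p)$, and dividing by $p-1$; nothing in this depends on whether $i=j$.
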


Our proof uses the sieve-theoretic framework unravelled by Granville and Soundararajan in 2006 \cite{GranvilleSound}, which we recall in \autoref{GSsieve} so as to be self-contained.
To obtain the necessary estimates to feed into the sieve, we apply a deep result of Gorodnik and Nevo from 2010 \cite{GorodNevo}. 

We note that since this effective congruence counting is the key input, one can also apply such arguments to thin linear groups, thanks to the work of Bourgain, Gamburd and Sarnak \cite{BGS2010}. Indeed, this was done in the case of Apollonian circle packings with integral curvatures by Djankovi\'c in \cite{Djankovic2011}.

\textbf{Acknowledgements:} I would like to thank Efthymios Sofos for asking the question answered in this paper. I am grateful to Zeev Rudnick for several helpful conversations and encouraging me to write up this short note. I am also grateful to Carlo Pagano for the comments he made on a previous version of this draft that led to an improved presentation. 
This research was supported by the  
European Research Council under the European Union's Seventh
Framework Programme (FP7/2007-2013) / ERC grant agreement
n$^{\text{o}}$ 320755.

\section{Counting with congruences}
We need the following special case of~\cite[Corollary 5.2]{GorodNevo}.

For $n \ge 2$ and a positive integer $Q$, define the following --- principal congruence --- subgroup of $\SL_n(\Z)$:
\beq \Gamma(Q) = \{ g \in \SL_n(\Z) \, : \, g \equiv I_n \pmod Q  \}. \eeq

\begin{claim}[Gorodnik--Nevo] \label{GoroNevo} For every $n \ge 2$, there exists $\delta>0$ such that for every $M \in \SL_n(\Z)$ and every integer $Q \ge 1$,
\beq \# \{A \in \Gamma(Q) M : \|A\| \le T \} = \frac {\#V_T(\Z)}{[\SL_n(\Z):\Gamma(Q)]} + O(\#V_T(\Z)^{1-\delta}), \eeq
where the implied constant is independent of $Q$.
\end{claim}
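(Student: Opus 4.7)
The claim is a uniform-in-$Q$ effective count of lattice points in a norm ball, and the natural approach is a quantitative mean ergodic theorem on the congruence quotient, in the spirit of Duke--Rudnick--Sarnak and Eskin--McMullen, combined with a uniform spectral gap. Writing $G := \SL_n(\R)$, $\Gamma := \SL_n(\Z)$, and $B_T := \{g \in G : \|g\| \le T\}$, the count equals
\[
\#\{A \in \Gamma(Q) M : \|A\| \le T\} = \sum_{\gamma \in \Gamma(Q)} \mathbf{1}_{B_T M^{-1}}(\gamma),
\]
which I would reinterpret, via the standard unfolding identity, as a spectral inner product on $X_Q := \Gamma(Q) \backslash G$ against a smoothing of $\mathbf{1}_{B_T M^{-1}}$.

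Concretely, I would pick a non-negative bump $\psi_\epsilon$ on $G$ supported in an $\epsilon$-neighbourhood of the identity with total mass one, and form majorants and minorants $F_T^\pm$ of $\mathbf{1}_{B_T M^{-1}}$ by convolving slightly inflated or deflated copies of the ball with $\psi_\epsilon$. After averaging $F_T^\pm$ over the $\Gamma(Q)$-action and decomposing the result on $X_Q$ against the constant function and the orthogonal complement $L^2_0(X_Q)$, the constant piece contributes $\mathrm{vol}(B_T)/\mathrm{vol}(X_Q)$, which matches $\#V_T(\Z)/[\Gamma : \Gamma(Q)]$ via the covolume formula, while the orthogonal piece is controlled by $\|F_T^\pm\|_2$ times the operator norm of the averaging operator on $L^2_0(X_Q)$.

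The crux is a spectral gap for the regular representation on $L^2_0(X_Q)$ that is \emph{uniform in} $Q$. For $n \ge 3$, this is immediate from Kazhdan's property (T) for $\SL_n(\R)$: every unitary representation without invariant vectors has matrix coefficients decaying at a rate depending only on $n$. For $n = 2$, property (T) fails, and one instead invokes Selberg-type bounds (the unconditional Jacquet--Gelbart estimate, sharpened by Kim--Sarnak) for principal congruence subgroups of $\SL_2(\Z)$, which yield a uniform exceptional exponent. In both cases the operator norm on $L^2_0(X_Q)$ decays like a fixed positive power of a suitable geometric scale, independently of $Q$.

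The principal obstacle is precisely this uniformity: the error has to be independent of the index $[\Gamma : \Gamma(Q)]$, which itself grows like $Q^{n^2-1}$, and this is hopeless without a level-uniform spectral gap, so property (T) (respectively Selberg) is indispensable. Once that input is secured, balancing $\epsilon$ against the Lipschitz oscillation of $\partial B_T$, and paying a controlled Sobolev factor $\epsilon^{-O(1)}$ on the $L^2$ side, produces a power-saving in $\#V_T(\Z)$ for some $\delta > 0$ independent of $Q$ and of $M$, as claimed.
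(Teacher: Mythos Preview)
The paper does not prove this claim at all: it is stated as a black box, quoted as a special case of \cite[Corollary~5.2]{GorodNevo}, and the paper's own contribution begins only at the corollary that follows. So there is no ``paper's proof'' to compare against.

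That said, your sketch is a faithful outline of the Gorodnik--Nevo method itself: the unfolding onto $X_Q = \Gamma(Q)\backslash G$, the smoothing by a bump $\psi_\epsilon$, the decomposition into the constant plus $L^2_0$, and the identification of the level-uniform spectral gap (property~(T) for $n\ge 3$, Selberg/Kim--Sarnak for $n=2$) as the decisive input are exactly the ingredients of their argument. Your diagnosis that the uniformity of the implied constant in $Q$ is the crux, and that without a level-uniform gap the index $[\Gamma:\Gamma(Q)]\asymp Q^{n^2-1}$ would swamp everything, is correct. As a sketch this is fine; to turn it into a full proof you would still need to make the wavefront/well-roundedness estimate for $\partial B_T$ quantitative and carry out the $\epsilon$-optimisation, but none of that is subtle here.
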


In fact, we shall require the following consequence.

\begin{cor} \label{key_input} For every $n \ge 2$, there exists $\delta > 0$ such that for every square-free integer $q$ and every $(i,j) \in \{1, \ldots, n \}^2$,
\beq 
\#\{ g \in V_T(\Z) \, : \, q \mid g_{i,j}  \} = \prod_{\substack{p \in \bP \\ p \mid q}} \frac {p^{n-1}-1}{p^n-1} \#V_T(\Z) + O\left(q^{n^2-2} \#V_T(\Z)^{1-\delta} \right)
\eeq
\end{cor}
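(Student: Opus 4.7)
My plan is to partition $\{g \in V_T(\Z) : q \mid g_{i,j}\}$ according to the reduction $g \bmod q \in \SL_n(\Z/q\Z)$ and apply \autoref{GoroNevo} to each piece. Since the reduction map $\SL_n(\Z) \to \SL_n(\Z/q\Z)$ is surjective (strong approximation, or equivalently because $\SL_n(\Z)$ is generated by elementary matrices), every $M \in \SL_n(\Z/q\Z)$ admits a lift $\widetilde M \in \SL_n(\Z)$. Setting
\[
S_{i,j}(q) := \{ M \in \SL_n(\Z/q\Z) : M_{i,j} = 0\},
\]
the set to be counted is the disjoint union of $\Gamma(q) \widetilde M \cap V_T(\Z)$ over $M \in S_{i,j}(q)$. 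Applying \autoref{GoroNevo} to each coset and using $[\SL_n(\Z):\Gamma(q)] = \#\SL_n(\Z/q\Z)$ gives
\[
\#\{g \in V_T(\Z) : q \mid g_{i,j}\} = \frac{\#S_{i,j}(q)}{\#\SL_n(\Z/q\Z)} \#V_T(\Z) + O\!\left(\#S_{i,j}(q) \cdot \#V_T(\Z)^{1-\delta}\right).
\]

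It then remains to evaluate the ratio and to bound $\#S_{i,j}(q)$. The Chinese Remainder Theorem (which applies because $q$ is square-free) furnishes both a factorisation $\#\SL_n(\Z/q\Z) = \prod_{p \mid q} \#\SL_n(\F_p)$ and, compatibly, $\#S_{i,j}(q) = \prod_{p \mid q} \#S_{i,j}(p)$, so matters reduce to the prime case. The key combinatorial input, and the step I expect to require the most care, is the identity
\[
\frac{\#S_{i,j}(p)}{\#\SL_n(\F_p)} = \frac{p^{n-1}-1}{p^n-1}.
\]
I would establish it by considering the map $\SL_n(\F_p) \to \F_p^n \setminus \{0\}$ sending a matrix to its $j$-th column: since $\SL_n(\F_p)$ acts transitively on $\F_p^n \setminus \{0\}$, each fibre has the common size $\#\SL_n(\F_p)/(p^n-1)$; the number of non-zero vectors in $\F_p^n$ with vanishing $i$-th coordinate is $p^{n-1}-1$, so multiplying yields the claim. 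Taking the product over $p \mid q$ produces the main term in the corollary.

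For the error, the same identity combined with the standard estimate $\#\SL_n(\F_p) \asymp p^{n^2-1}$ gives $\#S_{i,j}(p) \ll p^{n^2-2}$ with an implicit constant depending only on $n$, whence $\#S_{i,j}(q) \ll q^{n^2-2}$ uniformly in the square-free integer $q$. Substituting this into the error term above yields the announced bound $O(q^{n^2-2} \#V_T(\Z)^{1-\delta})$ and completes the proof.
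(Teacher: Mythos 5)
Your proof follows the paper's approach step by step: the same decomposition into $\Gamma(q)$-cosets, the same application of Gorodnik--Nevo to each coset, and the same reduction to prime moduli via the Chinese Remainder Theorem. The one place you genuinely diverge is in establishing the identity $\#S_{i,j}(p)/\#\SL_n(\F_p) = (p^{n-1}-1)/(p^n-1)$: the paper computes $\#\{g \in \SL_n(\F_p) : g_{i,j}=0\}$ directly by choosing columns of a compatible $\mathrm{GL}_n(\F_p)$-matrix one at a time and dividing by $p-1$, whereas you use the transitivity of the $\SL_n(\F_p)$-action on $\F_p^n \setminus \{0\}$ so that all fibres of the ``$j$-th column'' map have common size $\#\SL_n(\F_p)/(p^n-1)$ and one only needs to count the $p^{n-1}-1$ admissible columns. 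Your version is tidier, since it avoids writing out and then cancelling the two long product formulae.

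One step does not quite work as written. From ``$\#S_{i,j}(p) \ll p^{n^2-2}$ with an implied constant $C = C(n)$'' you infer ``whence $\#S_{i,j}(q) \ll q^{n^2-2}$,'' but multiplying the prime bound over $p \mid q$ only gives $\#S_{i,j}(q) \le C^{\omega(q)} q^{n^2-2}$, and the factor $C^{\omega(q)}$ is unbounded in $q$ whenever $C>1$. The fix is immediate from your own identity: since $\#\SL_n(\F_p) = p^{n^2-1}\prod_{k=2}^{n}(1-p^{-k}) < p^{n^2-1}$ and $\frac{p^{n-1}-1}{p^n-1} < \frac{1}{p}$, one has the clean inequality $\#S_{i,j}(p) < p^{n^2-2}$ with constant $1$, and \emph{that} multiplies over $p \mid q$ to give $\#S_{i,j}(q) < q^{n^2-2}$, as the stated error term requires.
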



\begin{proof}
Fix $n \ge 2$ and $(i,j) \in \{1, \ldots, n \}^2$.

The group $\Gamma(q)$ acts by left multiplication on the set $\{ g \in \SL_n(\Z) \, : \, q \mid g_{i,j}  \}$, which we may therefore view as a disjoint union of finitely (a priori, at most $q^{n^2-1}$)
$\Gamma(q)$-orbits.
We restrict the orbits to elements having norm at most $T$ and note that each orbit has the same number of points, given by the formula in \autoref{GoroNevo}: 
\beq \frac {\#V_T(\Z)}{[\SL_n(\Z):\Gamma(q)]} + O(\#V_T(\Z)^{1-\delta}). \eeq
First, $\SL_n(\Z)/\Gamma(q) \cong \SL_n(\Z/q\Z)$ --- a manifestation of ``strong approximation'' --- so that 
\begin{align} 
[\SL_n(\Z):\Gamma(q)] &= \#\SL_n(\Z/q\Z) \\
&= \prod_{\substack{p \in \bP \\ p \mid q}} \# \SL_n(\Z/p\Z) \\
&= \prod_{\substack{p \in \bP \\ p \mid q}} \frac{(p^n-1)(p^n-p)(p^n-p^2) \cdots (p^n-p^{n-1})}{p-1} \label{denn}
\end{align}
where the second equality follows from the Chinese remainder theorem.

Next, the number of orbits is precisely 
\beq \#\{ g \in \SL_n(\Z/q\Z) \, : \, g_{i,j} = 0 \}. \eeq
By the Chinese remainder theorem we simply need to compute, for $p$ prime, 
\beq \#\{ g \in \SL_n(\F_p) \, : \,  g_{i,j} = 0 \}. \eeq
We estimate the number of such matrices in $\mathrm{GL}_n(\F_p)$ and then divide by $\#\F_p^\times=p-1$ to get the desired count.
We have $p^{n-1}-1$ choices for the $j${th} column, then $p^n-p$ choices for another column to be linearly independent from that column, $p^n-p^2$ choices for a third column to be linearly independent from the span of those two columns, etc.

Therefore 
\beq \#\{ g \in \SL_n(\F_p) \, : \, g_{i,j} = 0 \} = \frac{(p^{n-1}-1)(p^n-p)(p^n-p^2) \cdots (p^n-p^{n-1})}{p-1} \eeq
and
\beq \label{numn} \#\{ g \in \SL_n(\Z/q\Z) \, : \, g_{i,j} = 0 \} = \prod_{\substack{p \in \bP \\ p \mid q}} \frac{(p^{n-1}-1)(p^n-p)(p^n-p^2) \cdots (p^n-p^{n-1})}{p-1}.  \eeq

It thus follows from \eqref{denn} and \eqref{numn} that 
\begin{align}
\#\{ g \in V_T(\Z) \, : \, q \mid g_{i,j}  \} &= \frac{\#\{ g \in \SL_n(\Z/q\Z) \, : \, g_{i,j} = 0 \}}{[\SL_n(\Z):\Gamma(q)]} \#V_T(\Z) + O\left(\prod_{\substack{p \in \bP \\ p \mid q}} p^{n^2-2} \#V_T(\Z)^{1-\delta} \right) \\
&= \prod_{\substack{p \in \bP \\ p \mid q}} \frac{p^{n-1}-1}{p^n-1} \#V_T(\Z) + O\left(q^{n^2-2} \#V_T(\Z)^{1-\delta} \right),
\end{align}
as claimed.
\end{proof}

\section{Sieving}

\subsection{The Granville--Soundararajan framework} \label{GSsieve}

For a multiset $A = \{a_1, \ldots, a_x \}$ and a positive integer $d$, define \beq \A_d = \#\{n \le x \, : \, d \mid a_n \}. \eeq
Suppose that, for square-free $d$, $\A_d$ can be written in the form $\frac {h(d)}d x + r_d$ for some non-negative multiplicative function $h$ with $0 \le h(d) \le d$ for all square-free $d$.
For a set of primes $\P$ and $a \in \A$, define $\omega_\P(a) = \#\{ p \in \P \, : \, p \mid a \}$.
Define $\mu_\P = \sum_{p \in \P} \frac{h(p)}{p}$ and $\sigma_\P^2 = \sum_{p \in \P} \frac{h(p)}{p}\left(1 - \frac{h(p)}{p} \right)$.
Define $\mathcal{D}_k(\P)$ to be the set of square-free integers which are the products of at most $k$ elements of $\P$.
Finally, define $C_k = \dfrac{\Gamma(k+1)}{2^{k/2} \Gamma\left(\frac k2 +1\right)}$.

The following is \cite[Proposition 3]{GranvilleSound}.

\begin{claim}[Granville--Soundararajan] \label{GranvSound} Uniformly for all natural numbers $k \le \sigma_\P^{2/3}$, we have 
\beq \sum_{a \in \A} (\omega_\P(a) - \mu_\P)^k = C_k x \sigma_\P^k \left( 1 + O\left( \frac {k^3}{\sigma_\P^2} \right) \right)+  O\left( \mu_\P^k \sum_{d \in \mathcal{D}_k(\P)} |r_d| \right) \eeq
if $k$ is even
and
\beq \sum_{a \in \A} (\omega_\P(a) - \mu_\P)^k \ll C_k x \sigma_\P^k \frac {k^{3/2}}{\sigma_\P} +  \mu_\P^k \sum_{d \in \mathcal{D}_k(\P)} |r_d|  \eeq
if $k$ is odd.
\end{claim}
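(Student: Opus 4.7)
The plan is to carry out a method-of-moments computation: expand the $k$-th power, reduce it to the joint divisibility counts $\A_d$, identify the main term as the $k$-th central moment of a sum of independent Bernoullis, and compare that moment with the Gaussian one.

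The starting point is to write $\omega_\P(a) - \mu_\P = \sum_{p \in \P}\bigl(\mathbf{1}_{p \mid a} - h(p)/p\bigr)$, raise to the $k$-th power, swap summations, and expand the product over subsets: for each tuple $(p_1, \ldots, p_k) \in \P^k$ and each $S \subseteq \{1, \ldots, k\}$, the inner sum $\sum_{a \in \A}\prod_{i \in S}\mathbf{1}_{p_i \mid a}$ equals $\A_{\mathrm{lcm}(p_i \colon i \in S)}$. Inserting the hypothesis $\A_d = h(d)x/d + r_d$ splits the sum into a main term and an error term.

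For the main part, let $\{X_p\}_{p \in \P}$ be independent Bernoullis with $\bP(X_p = 1) = h(p)/p$. Since $h$ is multiplicative and $X_p^m = X_p$, for every $S$ one has $\E[\prod_{i \in S} X_{p_i}] = h(\mathrm{lcm}(p_i \colon i \in S))/\mathrm{lcm}(p_i \colon i \in S)$, so the main term reads
\[
x \cdot \E\biggl[\Bigl(\sum_{p \in \P}(X_p - \E[X_p])\Bigr)^{\!k}\biggr],
\]
that is, $x$ times the $k$-th central moment of a sum of bounded independent variables of variance $\sigma_\P^2$. I would then invoke a quantitative moment form of the central limit theorem: for even $k \le \sigma_\P^{2/3}$ this central moment equals $C_k \sigma_\P^k(1 + O(k^3/\sigma_\P^2))$ (the Gaussian value), whereas for odd $k$ it is bounded by $\ll C_k \sigma_\P^{k-1} k^{3/2}$. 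This can be proved by a Wick-style pairing expansion: the dominant contribution comes from tuples $(p_1, \ldots, p_k)$ whose indices pair off into $k/2$ pairs of equal primes, yielding the combinatorial factor $C_k = (k-1)!!$, while each additional coincidence beyond a perfect pairing costs a factor $\sigma_\P^{-2}$.

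The error term, bounded absolutely, becomes $\sum_{(p_i), S}\prod_{i \notin S}(h(p_i)/p_i) \cdot |r_{\mathrm{lcm}(p_i \colon i \in S)}|$. The sum over $(p_i)_{i \notin S}$ contributes $\mu_\P^{k - |S|}$; grouping the remaining variables by $d = \mathrm{lcm}(p_i \colon i \in S) \in \mathcal{D}_{|S|}(\P)$ yields $O_k(1)$ tuples per such $d$, and summing over $S \subseteq \{1, \ldots, k\}$ produces the required bound $O_k\bigl(\mu_\P^k \sum_{d \in \mathcal{D}_k(\P)} |r_d|\bigr)$. The hardest step is the quantitative CLT with explicit error $O(k^3/\sigma_\P^2)$ uniform in $k \le \sigma_\P^{2/3}$: it demands a careful combinatorial classification of prime tuples by their multiset partition, isolating the perfect-pairing contribution for even $k$ from strictly lower-order configurations, and handling odd $k$ separately since no perfect pairing of odd length exists and the leading term must vanish.
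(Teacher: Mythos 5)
The paper does not prove this claim at all: it is quoted verbatim as Proposition~3 of Granville and Soundararajan, and the text explicitly says ``The following is [Proposition 3 in GranvilleSound].'' So there is no internal proof to compare against; what you have written is a reconstruction of the Granville--Soundararajan argument itself, and the skeleton you describe is indeed theirs. The expansion of $(\omega_\P(a)-\mu_\P)^k$ over tuples $(p_1,\dots,p_k)\in\P^k$ and subsets $S\subseteq\{1,\dots,k\}$, the reduction to $\A_{\mathrm{lcm}}$, and the identification of the main term as $x\,\E\bigl[\bigl(\sum_{p\in\P}(X_p-\E X_p)\bigr)^k\bigr]$ for independent Bernoullis $X_p$ with $\bP(X_p=1)=h(p)/p$ (using multiplicativity of $h$ and that $X_p^m=X_p$) is exactly right and is the clean part.

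Two points remain gaps in what you wrote, though neither is a wrong turn. First, the quantitative moment estimate you invoke --- that the central $k$-th moment equals $C_k\sigma_\P^k\bigl(1+O(k^3/\sigma_\P^2)\bigr)$ for even $k$ and is $\ll C_k\sigma_\P^{k-1}k^{3/2}$ for odd $k$, \emph{uniformly} for $k\le\sigma_\P^{2/3}$ --- is precisely Granville--Soundararajan's Proposition~2 and carries essentially all of the work. A complete argument must classify tuples by the multiset partition they induce, observe that any block of size one annihilates the contribution (since $\E[X_p-\E X_p]=0$), show that a configuration with $m$ distinct primes contributes $O(\sigma_\P^{2m})$, isolate the perfect pairings ($m=k/2$, combinatorial weight $(k-1)!!=C_k$) for even $k$, and control the count and weight of all lower-order partitions with constants uniform in $k$; ``each additional coincidence costs $\sigma_\P^{-2}$'' is the right heuristic but does not by itself yield the stated $k^3/\sigma_\P^2$ and $k^{3/2}/\sigma_\P$ error factors. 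Second, your error-term bound introduces an ``$O_k(1)$ tuples per $d$'' multiplicity, i.e.\ a $k$-dependent constant, whereas the claim is asserted uniformly in $k\le\sigma_\P^{2/3}$; the original proof tracks these combinatorial factors (of size up to roughly $3^k$ or $k!$ depending on how one organizes the count) and absorbs them against $\mu_\P^k$, which again takes some care. For the application in this paper $k$ is fixed, so these uniformity issues do not bite, but as a proof of the claim as stated they are genuine omissions.
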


\subsection{Proof of the main theorem}
In this section, we prove \autoref{eksln}.

Fix $n \ge 2$ and $(i,j) \in \{ 1, \ldots, n \}^2$.

Adopting the notation in \cite{GranvilleSound}, we define
\beq 
\mathcal{A}=\{g_{i,j} : g \in V_T(\Z)\},
\eeq
a multiset of size $x:=\#V_T(\Z)$.

Note that \beq \label{slnasymp} x \sim c_n T^{n^2-n} \eeq by a special case of equation (1.12) in \cite[Example 1.6]{DukeRudnickSarnak}, with the explicit formula for $c_n$ given by $c_n = \dfrac {\pi^{n^2/2}}{\Gamma\left(\frac n2 \right) \Gamma\left(\frac{n^2-n+2}{2} \right) \zeta(2) \dots \zeta(n)}$.

For our application, we take 
\beq \mathcal{P}=\{\text{all primes} \le T^{\epsilon(T)}\}
\eeq
where $\epsilon$ is a real function which tends to $0$ at infinity that we shall choose later. 
More precisely, we shall choose $\epsilon(T)$ to be of the form 
\[
\epsilon(T):=\frac{1}{(\log \log T)^\psi}
,\]
where $\psi$ is a fixed positive constant to be chosen later.

Finally, we define the multiplicative function $h \colon \Z_{\ge 1} \to \R_{\ge 0}$ by
\beq
h(q)=\mu^2(q) q \prod_{\substack{p \in \bP \\ p \mid q}} \frac{p^{n-1}-1}{p^n-1}
\eeq
In particular, for $p$ prime, \beq \frac {h(p)}p=\frac {p^{n-1}-1}{p^n-1} = \frac 1p + O\left(\frac{1}{p^n} \right). \eeq

Therefore, using Mertens' theorem we have 
\beq \label{mean}
\mu_{\mathcal{P}} = \llt + O(\log \log \log T)
\eeq
and likewise  
\beq \label{variance}
\sigma_{\mathcal{P}}^2 = \llt + O(\log \log \log T).
\eeq

Now \autoref{key_input} can be restated as follows: there exists a positive $\delta$ such that for every square-free positive integer $q$,
\beq \label{sieve} \mathcal{A}_q = \frac {h(q)}q x + r_q \eeq
where 
\beq \label{sieve_err} r_q = O(q^{n^2-2} x^{1-\delta}). \eeq

To help simplify the presentation, we introduce some probabilistic language.

For each $T > 0$, we equip the finite set $V_T(\Z)$ with the uniform probability measure.
For the given pair $(i, j) \in \{1, \ldots, n\}^2$, we define the random variable $\omega_{i,j}$ on $V_T(\Z)$ by $\omega_{i,j} \colon g \mapsto \omega(g_{i,j})$.
We can thus restate \autoref{eksln} as the convergence in distribution, as $T \to +\infty$, of the random variables $\frac{\omega_{i,j}(g) - \llt}{\sqrt{\llt}}$ to the standard Gaussian $\mathcal{N}(0,1)$.

The estimates for the moments in \autoref{GranvSound} apply to $\omega_\P$ which is a truncated version of the random variable we are interested in; still, choosing $\epsilon$ carefully allows us to extract all the information we need, as we make clear in the following lemma.

\begin{lemma} \label{trunc_equiv}
As $T \to +\infty$, the random variables $\frac{\omega_{i,j} - \mu_\P}{\sigma_P}$ converge in distribution to $\mathcal{N}(0,1)$ if and only if the random variables $\frac{\omega_\P - \mu_\P}{\sigma_P}$ do.
\end{lemma}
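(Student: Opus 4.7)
My plan is to compare $\omega_{i,j}(g)$ and $\omega_\P(g)$ pointwise on $V_T(\Z)$, show that their difference divided by $\sigma_\P$ tends to $0$ uniformly in $g$, and conclude by a Slutsky-type argument.

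The first step is to exploit the trivial bound $|g_{i,j}| \le \|g\| \le T$ coming from the Frobenius norm. For a nonzero integer $m$ with $|m| \le T$, the number of distinct prime divisors of $m$ exceeding $T^{\epsilon(T)}$ is at most $1/\epsilon(T) = (\llt)^\psi$, since otherwise $|m|$ would exceed $(T^{\epsilon(T)})^{1/\epsilon(T)} = T$. With the natural convention $\omega_\P(0) = \omega(0) = 0$, this yields the purely deterministic pointwise bound
\[
0 \le \omega_{i,j}(g) - \omega_\P(g) \le (\llt)^\psi
\]
valid uniformly for $g \in V_T(\Z)$.

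Next I would combine this with the asymptotic $\sigma_\P = \sqrt{\llt}\,(1 + o(1))$ from \eqref{variance} to obtain
\[
\sup_{g \in V_T(\Z)} \left| \frac{\omega_{i,j}(g) - \mu_\P}{\sigma_\P} - \frac{\omega_\P(g) - \mu_\P}{\sigma_\P} \right| = O\!\left((\llt)^{\psi - 1/2}\right),
\]
which tends to $0$ as $T \to +\infty$ provided $\psi < 1/2$. Since the two normalised random variables then differ by a deterministic quantity vanishing uniformly on $V_T(\Z)$, their cumulative distribution functions differ pointwise by a quantity tending to $0$, so one converges to $\mathcal{N}(0,1)$ if and only if the other does.

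There is no substantive obstacle here; the argument is essentially a quantitative Slutsky-type reduction relying only on the crude norm bound $|g_{i,j}| \le T$. The only point to watch is that the constraint $\psi < 1/2$ produced at this step must remain compatible with any further upper bound on $\psi$ that arises later when applying \autoref{GranvSound} to the sieve data \eqref{sieve}--\eqref{sieve_err}; both constraints should be easily reconciled by a suitable choice of $\psi$ in the open interval $(0, 1/2)$.
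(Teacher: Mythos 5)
Your proposal is correct and matches the paper's proof essentially line for line: both use the crude bound $|g_{i,j}|\le \|g\|\le T$ to get $0\le \omega_{i,j}-\omega_\P \le 1/\epsilon(T)=(\llt)^\psi$, observe that this is $o(\sigma_\P)$ once $\psi<1/2$, and conclude by a Slutsky-type reduction (the paper delegates this last step to a remark of Billingsley). The only quibble is that the clause ``their cumulative distribution functions differ pointwise by a quantity tending to $0$'' is not literally true in general; the clean justification is exactly the Slutsky argument you name at the outset, together with the continuity of the limiting Gaussian.
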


\begin{proof}
We have 
\[
c\in \Z\setminus \{0\}, z>1
\Rightarrow 
\#\{p \mid c: p>z\}
\leq \frac{\log |c|}{\log z}
,\]
where the primes are counted with multiplicity.
Therefore if $0<|c|\leq T$ and choosing $z = T^{\epsilon(T)}$, then 
\[
\omega(c) - \omega_{\mathcal{P}}(c)
\le \frac{\log T}{\log T^{\epsilon(T)}} 
\le \frac{1}{\epsilon(T)}
.\]
Observe that if we can choose the function
$\epsilon$ such that 
\beq \label{epsilon}
\frac{1}{\epsilon(T)}
=o(\sqrt{\log \log T}),
\eeq
then 
by the first sentence of \cite[Remark 1]{BillingsleyAMM}, we are done.
We can simply pick any $\psi \in (0, \frac 12)$ to have our truncation function $\epsilon$ tend to $0$ at infinity and satisfy \eqref{epsilon}, which completes the proof.
\end{proof}

Since we are interested in proving a central limit theorem, we shall not worry about the uniformity in $k$ --- it is, in principle, possible to keep track of that and obtain good estimates for all moments. 

In particular, \autoref{GranvSound} reads, as $T$ (or equivalently $x$) goes to infinity:
for every even $k$, 
\beq \label{even_moments} \sum_{a \in \A} \left(\frac{\omega_\P(a) - \mu_\P}{\sigma_\P} \right)^k = C_k x \left(1 + O\left(\frac 1{\sigma_\P^2} \right) \right) +  O\left(\frac{\mu_\P^k}{\sigma_\P^k} \sum_{q \in \mathcal{D}_k(\P)} |r_q| \right) \eeq
and for every odd $k$,
\beq \label{odd_moments} \sum_{a \in \A} \left(\frac{\omega_\P(a) - \mu_\P}{\sigma_\P} \right)^k \ll  \frac x {\sigma_\P} +  \frac{\mu_\P^k}{\sigma_\P^k} \sum_{q \in \mathcal{D}_k(\P)} |r_q|.  \eeq
To conclude using that result, we first need to show that the error term
\[
\mathcal{R}_k(T)
:=
\sum_{q \in \mathcal{D}_k(\P)} |r_q|
\]
is sufficiently small.
To do so, we first observe that every $q$ in $\mathcal{D}_k(\P)$
satisfies 
\[
q\leq (\max\{p:p \in \mathcal{P}\})^k
\leq T^{k \epsilon(T)}
.\]
Using this with \eqref{sieve_err}
we obtain
\[\mathcal{R}_k(T)
\ll x^{1-\delta}
\sum_{q\leq  T^{k \epsilon(T)}} q^{n^2-2}
\leq 
 T^{(n^2-1) k \epsilon(T)}
x^{1-\delta}
\ll 
x^{1-\delta + \frac {n^2-1}{n^2-n} k \epsilon(\gamma_n x^{1/(n^2-n)})}
,\]
where in the last inequality we recalled \eqref{slnasymp} and let $\gamma_n = \left(\frac{1}{c_n}\right)^{\frac{1}{n^2-n}}$.

Since $\epsilon(x)=o(1)$ we obtain that 
\[\mathcal{R}_k(T)
\ll
x^{1-\frac{\delta}{2}}
.\]
Injecting the above estimate 
into \eqref{even_moments} and \eqref{odd_moments} respectively,
we obtain that we have --- recalling the main terms of \eqref{mean} and \eqref{variance} and noting that as $T \to +\infty$, $\llt \sim \log \log x$ due to \eqref{slnasymp} --- for every even $k$,
\[
\frac 1x \sum_{a \in \mathcal{A}}  \left(\frac{\omega_\P(a) - \mu_\P}{\sigma_\P} \right)^k
=C_k \left(1+O\left(\frac{1}{\log \log x} \right) \right)
+
O\left((\log \log x)^{k/2} x^{-\frac{\delta}{2}} \right)
\]
and for every odd $k$,
\[
\frac 1x \sum_{a \in \A} \left(\frac{\omega_\P(a) - \mu_\P}{\sigma_\P} \right)^k \ll \frac 1{\sqrt{\log \log x}} + (\log \log x)^{k/2} x^{-\frac{\delta}{2}}
\]
where the implied constants depend on $k$
and we recall that $C_k$ is the $k$th moment of the standard normal distribution.
Equivalently, as $T \to +\infty$, we have that for every even $k$, 
\beq \frac 1{\#V_T(\Z)} \sum_{a \in \mathcal{A}}  \left(\frac{\omega_\P(a) - \mu_\P}{\sigma_\P} \right)^k \to C_k \eeq
and for every odd $k$,
\beq \frac 1{\#V_T(\Z)} \sum_{a \in \mathcal{A}}  \left(\frac{\omega_\P(a) - \mu_\P}{\sigma_\P} \right)^k \to 0, \eeq
which shows (since the normal distribution is characterised by its moments) that as $T \to +\infty$, the random variables $\frac{\omega_\P - \mu_\P}{\sigma_\P}$ converge in distribution to $\mathcal{N}(0,1)$.

Thanks to \autoref{trunc_equiv}, this means that the random variables  $\frac{\omega_{i,j} - \mu_\P}{\sigma_\P}$ converge in distribution to $\mathcal{N}(0,1)$. This is almost what we want, with the following lemma making the final connection and concluding the proof of \autoref{eksln}.

\begin{lemma}
As $T \to +\infty$, the random variables $\frac{\omega_{i,j} - \mu_\P}{\sigma_\P}$ converge in distribution to $\mathcal{N}(0,1)$ if and only if the random variables $\frac{\omega_{i,j} - \llt}{\sqrt{\llt}}$ do.
\end{lemma}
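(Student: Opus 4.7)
The plan is to relate the two normalised random variables by an affine transformation whose coefficients converge to deterministic limits, and then invoke Slutsky's theorem to transfer convergence in distribution between them. The starting point is the algebraic identity
\[
\frac{\omega_{i,j} - \llt}{\sqrt{\llt}}
= \frac{\sigma_\P}{\sqrt{\llt}} \cdot \frac{\omega_{i,j} - \mu_\P}{\sigma_\P} + \frac{\mu_\P - \llt}{\sqrt{\llt}},
\]
which is purely formal and holds pointwise on $V_T(\Z)$.

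Next, I would control the two deterministic factors using the asymptotics already established in \eqref{mean} and \eqref{variance}. Equation \eqref{variance} yields
\[
\frac{\sigma_\P}{\sqrt{\llt}} = \sqrt{1 + O\!\left(\frac{\log\log\log T}{\llt}\right)} \xrightarrow[T\to+\infty]{} 1,
\]
while \eqref{mean} gives
\[
\frac{\mu_\P - \llt}{\sqrt{\llt}} = O\!\left(\frac{\log\log\log T}{\sqrt{\llt}}\right) \xrightarrow[T\to+\infty]{} 0.
\]
Hence, by Slutsky's theorem, if $(\omega_{i,j} - \mu_\P)/\sigma_\P$ converges in distribution to $\mathcal{N}(0,1)$, then $(\omega_{i,j} - \llt)/\sqrt{\llt}$ converges to $1 \cdot \mathcal{N}(0,1) + 0$, which is $\mathcal{N}(0,1)$.

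For the converse direction I would invert the identity, writing
\[
\frac{\omega_{i,j} - \mu_\P}{\sigma_\P} = \frac{\sqrt{\llt}}{\sigma_\P} \cdot \frac{\omega_{i,j} - \llt}{\sqrt{\llt}} - \frac{\mu_\P - \llt}{\sigma_\P},
\]
and noting that $\sqrt{\llt}/\sigma_\P \to 1$ and $(\mu_\P - \llt)/\sigma_\P = O(\log\log\log T/\sqrt{\llt}) \to 0$ by the same estimates, so a second application of Slutsky's theorem closes the loop. There is no real obstacle here: the entire lemma boils down to the numerical observation that $\log\log\log T/\llt$ and $\log\log\log T/\sqrt{\llt}$ tend to $0$, which is immediate, and to the standard fact that an affine transformation whose linear coefficient tends to $1$ and whose additive coefficient tends to $0$ preserves weak convergence to $\mathcal{N}(0,1)$.
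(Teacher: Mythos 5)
Your proposal is correct and takes essentially the same route as the paper: the same decomposition into a multiplicative factor tending to $1$ and an additive term tending to $0$, controlled by \eqref{mean} and \eqref{variance}, with Slutsky's theorem playing the role that the paper assigns to the remark it cites from Billingsley.
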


\begin{proof}
We write 
\beq
\frac{\omega_{i,j} - \llt}{\sqrt{\llt}} = \frac{\sigma_\P}{\sqrt{\llt}} \left( \frac{\omega_{i,j} - \mu_\P}{\sigma_\P} + \frac{\mu_\P - \llt}{\sigma_\P} \right)
\eeq
and note that, as $T \to +\infty$, $\frac{\sigma_\P}{\sqrt{\llt}} \to 1$, while $\frac{\mu_\P - \llt}{\sigma_\P} \to 0$, thanks to \eqref{variance} and \eqref{mean}.
The claim now follows from the third sentence in \cite[Remark 1]{BillingsleyAMM}.
\end{proof}

\bibliographystyle{plain}
\bibliography{bibliography}

\end{document}